\newtheorem{thm}{Theorem}[section]
\newtheorem{cor}[thm]{Corollary}
\newtheorem{lem}[thm]{Lemma}
\newtheorem{prop}[thm]{Proposition}
\theoremstyle{definition}
\newtheorem{defn}[thm]{Definition}
\theoremstyle{remark}
\newtheorem{rem}[thm]{Remark}
\newtheorem{ex}[thm]{\bf Example}
\newcommand{\bt}{\begin{thm}}
\newcommand{\et}{\end{thm}}
\newcommand{\bc}{\begin{cor}}
\newcommand{\ec}{\end{cor}}
\newcommand{\bl}{\begin{lem}}
\newcommand{\el}{\end{lem}}
\newcommand{\bp}{\begin{prop}}
\newcommand{\ep}{\end{prop}}
\newcommand{\bd}{\begin{defn}}
\newcommand{\ed}{\end{defn}}
\newcommand{\br}{\begin{rem}}
\newcommand{\er}{\end{rem}}
\newcommand{\bpr}{\begin{proof}}
\newcommand{\epr}{\end{proof}}
\newcommand{\bex}{\begin{ex}}
\newcommand{\eex}{\end{ex}}
\newcommand{\bcd}{\begin{CD}}
\newcommand{\ecd}{\end{CD}}
\newcommand{\bi}{\begin{itemize}}
\newcommand{\ei}{\end{itemize}}
\newcommand{\be}{\begin{enumerate}}
\newcommand{\ee}{\end{enumerate}}
\newcommand{\ba}{\begin{array}}
\newcommand{\ea}{\end{array}}
\newcommand{\beq}{\begin{equation}}
\newcommand{\eeq}{\end{equation}}
\newcommand{\beqa}{\begin{eqnarray}}
\newcommand{\eeqa}{\end{eqnarray}}
\newcommand{\eca}{\end{cases}}
\newcommand{\bal}{\begin{aligned}}
\newcommand{\eal}{\end{aligned}}
\newcommand{\ds}{\displaystyle}
\newcommand{\Z}{{\mathbb Z}}
\newcommand{\R}{{\mathbb R}}
\newcommand{\C}{{\mathbb C}}
\newcommand{\T}{{\mathbb T}}
\newcommand{\I}{{\mathbb I}}
\newcommand{\PP}{{\mathbb P}}
\newcommand{\spn}{{\operatorname{span}}}
\newcommand{\Hom}{{\operatorname{Hom}}}
\newcommand{\leb}{{\operatorname{leb\;}}}
\newcommand{\re}{{\operatorname{Re\,}}}
\newcommand{\im}{{\operatorname{Im\,}}}
\newcommand{\<}{\langle}
\renewcommand{\>}{\rangle}
\newcommand{\uk}{|\kern-3.3pt\uparrow\>}
\newcommand{\dk}{|\kern-3.3pt\downarrow\>}
\newcommand{\ub}{\<\uparrow\kern-3.3pt|}
\newcommand{\db}{\<\downarrow\kern-3.3pt|}
\begin{document}
\title{\bf Linear spectral transformations of  Carath\'eodory functions}
\author{M.J. Cantero, L. Moral, L. Vel\'azquez
\footnote{The work of the authors have been supported in part by the research project MTM2011-28952-C02-01 from the Ministry of Science and Innovation of Spain and the European Regional Development Fund (ERDF)   and by Project E-64 of
Diputaci\'{o}n General de Arag\'{o}n (Spain).}}
\date{\small Departamento de Matem\'atica Aplicada and IUMA. Universidad de Zaragoza. Spain}
\maketitle

\begin{abstract} In this paper we present some recent results
concerning linear spectral transformations of Carath\'eodory
functions. More precisely, given two Carath\'eodory functions
related by a linear spectral transformation, we study the relation
between the corresponding moment functionals and, in the positive
definite case, the relation between the measures.

We will see that rational modifications of functionals are included
in the linear spectral transformations. However, we will show that
there exist a huge class of linear spectral transformations which
are not given by rational modifications of  functionals. Indeed, we
will characterize those linear spectral transformation which come
from a rational modification.

In the general case we will discuss the relation between the
functionals involved in a linear spectral transformation, which
allows us to identify  the difficulties to connect the related
functionals.

Actually, several examples will show how amazing can be the
relationships between the moment functionals associated with a
linear spectral transformation.

\end{abstract}

\noindent{\it Keywords and phrases}: Hermitian functionals, rational
modifications of functionals,
 linear spectral transformations, Carath\'eodory functions.
\medskip

\noindent{\it (2000) AMS Mathematics Subject Classification}: 42C05.

%%%%%%%%%%%%%%%%%%%%%%%%%%%%%%%%%%%%%%%%%%%%%%%%%%%%%%%%%%%%%%%%%%%%%%%%%%%%%%%%%%%%%
\section{Introduction} \label{INT}
%%%%%%%%%%%%%%%%%%%%%%%%%%%%%%%%%%%%%%%%%%%%%%%%%%%%%%%%%%%%%%%%%%%%%%%%%%%%%%%%%%%%%

\medskip

  The study of modifications of hermitian linear functionals has been
considered by several authors. Among others, the contributions
\cite{ACMV,BuMa04,CaMoVe,CaGaMa} are remarkable. In some cases the
relation between the  Carath\'eodory functions is known. One of the
cases that has been studied more profusely is the situation in which
the Carath\'eodory functions are related by a rational
transformation, (see \cite{PeSt96}). We will study a special class
of these transformation, the so-called linear spectral
transformations. We will perform the analysis in the general case of
(non necessarily positive-definite, neither quasi-definite)
hermitian functionals.

\medskip

Before discussing the results we will introduce some conventions. We
will use the notation $\PP=\C[z]$, $\PP_*=\C[z^{-1}]$,
$\Lambda=\PP\cup \PP_*$, $\PP_n= \spn\{1,z,\dots,z^n\}$ and
$\Lambda_{p,q}=\spn\{z^p,\dots,z^q\}$, $p\leq q$, $p,q\in\Z$. If
$p=-\infty$ or $q=\infty$ we will not understand $\Lambda_{p,q}$ as
a set of finite linear combinations of powers of $z$,
but as a complex linear space of formal series.
We will work whit functionals $u\in \Lambda' = \Hom (\Lambda, \C)$.
For any $u\in\Lambda'$ and $L\in \Lambda$ we define
 $uL\in \Lambda'$ by $uL[f]=u[Lf]$. If $\mu_n=u[z^n]$, $n \in\Z$, are
 the moments of $u$, we will say that the functional $u$ is
hermitian if $\mu_{-n}=\overline{\mu}_n$. In this case $\mu_0\in\R$.
We use the notation
  $\mathcal{H} = \{u \in  \Lambda' \;|\;u\;\text{is Hermitian}\}$. A
  particular case of hermitian functionals are those defined by a
  measure $\mu$ on the unit circle $\T:=\{z\in\C \;|\; |z|=1\}$ via
  $u[f]=\int_{\T} f(z) \, d\mu(z)$.
  % In this case we will use the
  %notation $u=d\mu$.
  The functional given by a Dirac delta at a point
  $\alpha$ will be denoted by $\delta_{\alpha}$, while the functional defined by the Lebesgue
  measure on the unit circle will be denoted by leb.

\medskip

 For  $u \in \mathcal{H}$ we define the Carath\'eodory formal series
  (CS)
  $$F(z)= \ds\mu_0 + 2\sum_{n\geq 1} \mu_{-n} z^n \in \Lambda_{0, \infty}.$$
 If $F$ is summable in a certain domain $\Omega\subset\C$, we will refer to
 $F$ as a Carath\'eodory function in $\Omega$.
   The operators $_*$ and
  $^{*_p}$, $p\in\Z$, act on a formal series $H$ --and, in particular, on
  any Laurent polynomial-- as $H_*(z)=\overline{H(1/\overline{z})}$ and $H^{*_p}(z) =
  z^p H_*(z)$. If $P$ is a polynomial of degree $p$ we
  write $P^{*_p}=P^*$.
  Given $u\in \Lambda'$, we define the  Laurent formal series (LS)
  $$
  \mathcal{L}[u](z)=\sum_{-\infty}^\infty \mu_{-n} z^n \in \Lambda_{-\infty,
  \infty},
  $$
which characterizes completely the functional $u$, i.e., there is a
one-to-one correspondence
$$
\begin{aligned}
\mathcal{L}:\Lambda' &\longrightarrow \{LS\}. \\ u &\longmapsto
\mathcal{L}[u]
\end{aligned}
$$
If the functional $u$ is hermitian, its LS  and CS are related by
 $$   \mathcal{L}[u](z)={F(z)+F_*(z)\over 2}.
 $$
It is immediate to check that
  $$
\mathcal{L}[u + v] = \mathcal{L}[u] + \mathcal{L}[v],  \qquad
\mathcal{L}[u L] = \mathcal{L}[u] L_*, \qquad \forall u\in
\Lambda', L\in \Lambda.
$$

\medskip

%%%%%%%%%%%%%%%%%%%%%%%%%%%%%%%%%%%%%%%%%%%%%%%%%%%%%%%%%%%%%%%%%%%%%%%%%%%%%%%%%%%%%
\section{Linear spectral transformations and rational modifications} \label{LSTRM}
%%%%%%%%%%%%%%%%%%%%%%%%%%%%%%%%%%%%%%%%%%%%%%%%%%%%%%%%%%%%%%%%%%%%%%%%%%%%%%%%%%%%%
  \bigskip

\noindent In what  follows, $u$ and $v$ will denote two  hermitian
linear functionals with Carath\'eodory series $F$ and $G$
respectively.
 \vskip 0.25cm
\begin{defn}
  We will say that $u$ and $v$ are related by a  linear spectral transformation (LST) if there
  exist  $L,M \in \Lambda_0:=\Lambda\setminus \{0\}$ and $C\in \Lambda$ such that $FL=GM+C$.
 \end{defn}
\vskip 0.25cm
\begin{defn}
 We will say that $u$ and $v$ are related by a rational modification
 (RM) if there exist $L,M \in \Lambda_0$ such that $uL=vM$.
   \end{defn}

   \medskip

  We are interested in studying the possible connections between two
hermitian linear functionals $u$
 and $v$ whose corresponding Carath\'eodory functions are related by a LST.
 In particular we will clarify the relation between RM and LST. Our first result is the
 following one.
 \begin{prop}\label{FR} If the linear functionals $u$, $v$  are related by a {\rm RM},
 then the corresponding CS are linked by a {\rm LST}, i.e., {\rm RM} $\Rightarrow$ {\rm LST.}
 \end{prop}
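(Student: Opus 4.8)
The plan is to push the functional identity $uL=vM$ through the Laurent-series map $\mathcal{L}$ and then to recognize, by a support argument, that the remainder term which appears is automatically a Laurent polynomial.

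First I would apply $\mathcal{L}$ to both sides of $uL=vM$ and use the rule $\mathcal{L}[uL]=\mathcal{L}[u]\,L_*$ from the introduction, together with the analogous one for $v$, to obtain $\mathcal{L}[u]\,L_*=\mathcal{L}[v]\,M_*$. Since $u$ and $v$ are hermitian I can substitute $\mathcal{L}[u]=(F+F_*)/2$ and $\mathcal{L}[v]=(G+G_*)/2$; clearing the factor $2$ then yields the identity of formal series $(F+F_*)L_*=(G+G_*)M_*$. These products are meaningful because $F,G\in\Lambda_{0,\infty}$ while $F_*,G_*\in\Lambda_{-\infty,0}$, and a one-sided formal series can always be multiplied by a Laurent polynomial coefficientwise.

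Next I would rewrite this as $FL_*-GM_*=G_*M_*-F_*L_*$ and compare supports. Pick $n$ so that all exponents occurring in $L$ and in $M$ lie in $[-n,n]$; then the same is true for $L_*$ and $M_*$. Consequently the left-hand side lies in $\Lambda_{-n,\infty}$ while the right-hand side lies in $\Lambda_{-\infty,n}$; since the two series are equal, both lie in $\Lambda_{-n,n}$, i.e. they coincide with a genuine element $C\in\Lambda$. Because $_*$ is an involution of $\Lambda$, from $L,M\in\Lambda_0$ we get $L_*,M_*\in\Lambda_0$, so setting $\hat L:=L_*$ and $\hat M:=M_*$ the relation $F\hat L=G\hat M+C$ is exactly a LST, which proves RM $\Rightarrow$ LST.

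The only step that genuinely needs care is this support comparison: one must check that cancellation of the two infinite tails really forces finiteness of the common value, and that applying $_*$ keeps $L$ and $M$ nonzero, so that the output honestly fits the definition of a LST with $\hat L,\hat M\in\Lambda_0$. Everything else is a direct transcription of the algebraic identities for $\mathcal{L}$ recorded in Section~\ref{INT}.
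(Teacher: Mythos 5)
Your proposal is correct and follows essentially the same route as the paper: translate $uL=vM$ through $\mathcal{L}$ into $(F+F_*)L_*=(G+G_*)M_*$, define $C$ as the common value of $FL_*-GM_*$ and $G_*M_*-F_*L_*$, and conclude by the same two-sided support comparison that $C$ is a genuine Laurent polynomial. The only (immaterial) difference is that the paper starts from $uL_*=vM_*$ so the resulting LST is written with $L,M$ rather than $L_*,M_*$; your explicit check that $_*$ preserves $\Lambda_0$ handles this relabeling.
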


 \begin{proof} Let $u,v\in \Lambda'$ such that $uL_*=vM_*$, with
 $L,M\in \Lambda_{p,q}$  ($p\leq q$, finite). Equivalently,
 $$
 (F+F_*)L=(G+G_*)M.
 $$
Taking $C=FL-GM=G_*M-F_*L$, then $FL-GM\in \Lambda_{p,\infty}$,
 $G_*M-F_*L\in \Lambda_{-\infty,q}$. In other words, $C\in
 \Lambda_{p,q}$ satisfies $FL=GM+C$.
 \end{proof}

\medskip

\begin{rem}
 The converse of Proposition \ref{FR} is not true in general.
 To see this, it is enough
to consider  $v = \delta_1$,  whose  Carath\'eodory function is
$G(z)=(1+z)/(1-z)$, $z\in\C\setminus\{1\}$. The LST given by
$F(z)=G(z)(1-z^2)$ provides a Carath\'eodory function $F$ without
poles and, consequently, the corresponding orthogonality measure
$\mu$ is absolutely continuous. Besides, $\mu$ is supported on the
whole unit circle. Hence, there is no RM between $u$ and $v$
because, if two functionals defined by measures are related by a RM,
the limit points of the support of both measures must coincide.
\end{rem}

\medskip

 Although not every LST comes from a RM, Proposition \ref{FR} states
 that RM constitute an important subclass  of LST. Our first aim is to
 perform a detailed analysis  of the RM subclass. The study  of the
 transformations in $\{\rm LST\}  \setminus \{\rm RM\}$ will be the objective of
 Section 4.

\medskip
First of all we should note that RM and LST are both equivalence
relations. Proposition \ref{FR} states that RM is a finer
equivalence relation than LST. To study properly  the RM relation it
is convenient to separate an equivalence class to avoid certain
ambiguities.

 \medskip

\begin{defn}\label{Delta} We define the class $\Delta$ of hermitian linear
functionals by
$$
\Delta=\{u \in \mathcal{H} \; \mid \; \exists L\in \Lambda_0 \;{\rm
s.t.}\; uL=0\}.
$$
\end{defn}

 Definition \ref{Delta} means that $\Delta$ is the equivalence
 class of the null functional with respect to the equivalence
 relation RM. The referred ambiguity comes from the fact that
 every pair $u,v\in\Delta$ satisfies $uL=vM=0$ for some
$L,M \in \Lambda$. Therefore, the equality $uL=vMN$ is also true for
all $N\in\Lambda$. Thus, the correspondence $L\leftrightarrow M$ is
not biunivocal in $\Delta$. To avoid these ambiguities we will work
with RM in $\mathcal{H}\setminus \Delta$.

\medskip

 The condition $u\in \Delta$ is equivalent to $uQ=0$  for some $Q\in\PP$, which means that
 the functional $u$ is a linear combination of $\delta's$ and their derivatives
supported on the non-zero roots of $Q$. Since the functional $u$ is
hermitian, these $\delta's$ are supported on symmetric points
$\alpha$, $1/\overline{\alpha}$ with conjugated masses.
 Thus, we can choose the minimal polynomial $Q$ such that $uQ=0$ satisfying $Q=Q^*$.
 The minimality of $Q$ means that $uQ_1=0$ implies that $Q$
divides $Q_1$. In such a case the CS  $F$  of $u$  is summable as a
meromorphic function in $\C$, $F=P/Q$, $P$ being a polynomial with
$\deg P\leq \deg Q=q$.

 In terms of LST,
 $$(F+F_*)Q_*=0.$$
Thus,
$$z^{-q}FQ+F_*Q_*=0 \qquad {\rm and}\qquad  z^{-q}P+P_*=0,$$
 i.e.,  $$
P=-z^qP_*=-P^{*_q}.$$
Summarizing, for any $u \in \Delta$, $F$ is
summable as a rational function
$$
F={P\over Q}, \qquad Q=Q^*, \qquad P^{*_{\deg Q}} = -P.
$$

Actually, the above property
characterizes the functionals  belonging to
 $\Delta$. In fact, from
 $$FQ=P$$
 and
 $$
 F_*Q_*=P_*=z^{-q}P^{*_q}=-z^{-q}P
 $$
  we obtain $$(F+F_*)Q=0,$$
i.e.,  $$uQ=0.$$
 Note that $P(0)/Q(0)\in\R$  due to  the hermitian
character of $u$.

\medskip

  Besides, if  $Q$ is minimal, $\gcd (P,Q)=1$.
  Indeed, if $S$ is a common divisor of  $P$ and $Q$, $S^*$ is also common divisor
 of $P$ and $Q$. Then, $P$ and $Q$ are divisible by some $T\in\PP$ with $T=T^*$. The
 polynomials $P_1=P/T$ and $Q_1=Q/T$   satisfy $Q_1^*=Q_1$,
 $P_1^{*_{\deg Q_1}}=-P_1$ and $F=P_1/Q_1$. Thus, $uQ_1=0$,  which
 contradicts the minimality of $Q$.

 \medskip

 In what follows we will work with RM in $\mathcal{H}\setminus
 \Delta$ to avoid the ambiguities in the relation $L\leftrightarrow
 M$ that appear in the class $\Delta$.

 \medskip

\begin{prop} Let $u,v\in \mathcal{H}$. Then, if $uL=vM$, the
correspondence $L\leftrightarrow M$ is biunivocal iff
$u,v\not\in\Delta$.
\end{prop}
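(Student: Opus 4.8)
The plan is to read the statement through the solution set $S=\{(L,M)\in\Lambda_0\times\Lambda_0 \;:\; uL=vM\}$ of the RM relation: saying that ``the correspondence $L\leftrightarrow M$ is biunivocal'' should mean that $S$ is the graph of a bijection between the $L$'s and the $M$'s occurring in it, i.e. that inside $S$ the element $L$ determines $M$ and $M$ determines $L$. (Totality clearly fails for a generic RM, so this partial-bijection reading is the only sensible one, and it matches the way $\Delta$ is discussed right after Definition \ref{Delta}.) With this interpretation the whole proof rests on a single reformulation of Definition \ref{Delta}: for $w\in\mathcal H$, the condition $w\notin\Delta$ says precisely that $wN=0$ with $N\in\Lambda$ forces $N=0$. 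Everything else is just linearity of the maps $N\mapsto uN$ and $N\mapsto vN$ on $\Lambda$.

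For the implication $u,v\notin\Delta\Rightarrow$ biunivocal I would argue directly: if $(L,M),(L,M')\in S$ then $vM=uL=vM'$, so $v(M-M')=0$, hence $M=M'$ because $v\notin\Delta$; symmetrically $(L,M),(L',M)\in S$ gives $u(L-L')=0$, hence $L=L'$ because $u\notin\Delta$. Thus $S$ is the graph of a bijection. For the converse I would prove the contrapositive. Assume, say, $v\in\Delta$ and pick $M_0\in\Lambda_0$ with $vM_0=0$. Given the pair $(L,M)\in S$ hypothesised in the statement, at least one of $M+M_0$ and $M+2M_0$ is nonzero (otherwise $M_0=0$); call it $M'\in\Lambda_0$. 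Then $vM'=vM=uL$, so $(L,M')\in S$ as well while $M'\neq M$, so one $L$ is matched with two different $M$'s and the correspondence is not biunivocal. The case $u\in\Delta$ is symmetric, producing distinct $L,L'$ matched with the same $M$ through some $L_0\in\Lambda_0$ with $uL_0=0$. (It is worth noting, although not needed for the proof, that multiplying $uL=vM$ by the minimal $Q=Q^*$ with $uQ=0$ yields $vMQ=0$, so a RM can in fact relate only functionals that both lie in $\Delta$ or both lie outside it.)

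This argument meets no genuine obstacle: the mathematical content is just the observation that $\mathcal H\setminus\Delta$ consists exactly of the functionals with trivial annihilator in $\Lambda$, after which both implications are immediate from linearity. The only two points needing a word of care are fixing the precise meaning of ``biunivocal'' (done above) and keeping the translated Laurent polynomial in $\Lambda_0=\Lambda\setminus\{0\}$, which is exactly what the ``$M+M_0$ or $M+2M_0$'' alternative achieves; no structural fact about the ring $\Lambda$, and in particular not the ideal structure of annihilators, is actually required.
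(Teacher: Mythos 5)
Your proof is correct and follows essentially the same route as the paper: both directions reduce to the observation that $w\notin\Delta$ means $wN=0$ forces $N=0$, with the forward implication obtained by subtracting two relations $vM=uL=vM_1$ and the converse by perturbing one member of a pair with a nonzero annihilator. You are in fact slightly more careful than the paper, whose converse only invokes the earlier discussion of the case where both $u$ and $v$ lie in $\Delta$; your parenthetical remark that the hypothesis $uL=vM$ forces $u\in\Delta\Leftrightarrow v\in\Delta$ closes that small gap.
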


\begin{proof} We have shown that the correspondence $L\leftrightarrow M$ is not
biunivocal if $u$ and $v$  lie in the equivalence class
$\Delta$. On the contrary, if $u,v\in\mathcal{H}\setminus\Delta$, then $vM_1=uL=vM$ implies $v(M-M_1)=0$, thus $M=M_1$.
 \end{proof}

\medskip

%%%%%%%%%%%%%%%%%%%%%%%%%%%%%%%%%%%%%%%%%%%%%%%%%%%%%%%%%%%%%%%%%%%%%%%%%%%%%%%%%%%%%
\section{Minimality of RM and LST} \label{MINRMLST}
%%%%%%%%%%%%%%%%%%%%%%%%%%%%%%%%%%%%%%%%%%%%%%%%%%%%%%%%%%%%%%%%%%%%%%%%%%%%%%%%%%%%%
\medskip

 For RM in $\mathcal{H}\setminus \Delta$ the correspondence $L \leftrightarrow   M$ is
 biunivocal, but the pair $(L,M)$ is not unique. Indeed,  $uL=vM$ implies
 $uLN=vMN$, $\forall N\in\Lambda$. Our aim is to choose the simplest pair $(L,M)$ satisfying $uL=vM$
 for each $u, v$ in the same RM equivalence class of $\mathcal{H}\setminus\Delta$. This means
 to cancel the common factors of $L$ and $M$ when it is possible.
 However, this cancelation is  not always viable as we show in the following example.

\medskip

 \bex  Let $u=\leb$ and  $v=\leb +\delta_1$. Obviously, the
relations
$$
u(z^2-1)=v(z^2-1)\qquad {\rm and}\qquad u(z-1)=v(z-1)
$$
are satisfied.  From the first relation to the second one,  a common
factor is simplified. However, the second relation is not reducible
due to the presence of $\delta_1$ in $v$. \eex
\medskip

  We are interested in minimal expressions which are not reducible by simplifying common
factors of $L$ and $M$. When $u,v\in \Delta$, the minimal
expressions are not unique: $\delta_1(z-1)=\delta_{-1}(z+1)$ and
$\delta_1(z^2-1)=\delta_{-1}(z+1)$ are both not reducible. In
contrast, we will see that there is an essentially  unique minimal
expression of a RM if $u,v \not\in \Delta$.

\medskip

For this purpose it is convenient to consider the set of all pairs
$(L,M)$ behind a given RM.

\medskip

\begin{defn} Given $u,v \in \mathcal{H}$   we define
$$
 \I\;\;=\;\;\I(u,v)\;\;= \{(L,M)\in \Lambda_0\times\Lambda_0 \hskip 3pt|\hskip 3pt uL=vM
 \}.
$$
\end{defn}

\medskip

 Note that ${\rm U}=\{\alpha z^k \hskip 3pt|\hskip 3pt \alpha\in\C^*, k\in\Z\}$ is
 the group of units of the ring $\Lambda$. Thus, we can define
 a partial order relation in $\I$ by
 $$(L_0,M_0)\leq (L_1,M_1) \quad \Longleftrightarrow \quad N(L_0,M_0)=(L_1,M_1), \quad N\in\Lambda,$$
 and
 $$
 (L_0,M_0)\equiv (L_1,M_1) \quad \Longleftrightarrow \quad N(L_0,M_0)=(L_1,M_1), \quad N\in {\rm U}.
 $$

  Whit respect to this order relation, $\I$ is an inductive set.
 Since $\I$ is  bounded from below by $(0,0)$, $\I$ has at least a
 minimal element. In what follows we will suppose that $\I\not=\emptyset$, i.e., $u,v$  are related by a RM.

 \medskip

 Our first result about minimality  uses the notion of gcd in $\Lambda$, which is unique up to
 factors in  $\rm{U}$. Indeed the division algorithm translates from
 $\PP$ to  $\Lambda = \rm {U} \PP$, and the gcd becomes unique in $\Lambda/{\rm U}\cong
 \PP/\C^*$.

\medskip \bt Let $u,v \in\mathcal{H}\setminus \Delta$ be related by a RM. Then, $\I= \I(u,v)$
has a minimal element $(L,M)$ which is unique up to units of
$\Lambda$. Besides, $\I=\Lambda_0(L,M):=(\Lambda_0L,\Lambda_0M)$.\et

\begin{proof}
 Let $(L_0,M_0) =(L,M)$ be a minimal element of $\I$  and $(L_1,M_1)\in
 \I$.
 Applying the Euclidean algorithm in $\Lambda$,
 $$
 L_k=L_{k+1}Q_{k+1} + L_{k+2}, \qquad k=0,1,\dots,m-2,
 $$
 with $Q_{k+1}\in \Lambda$ and $L_m\in\gcd(L_0,L_1)$. Therefore,
 $$
 uL_k=uL_{k+1}Q_{k+1} + uL_{k+2} = vM_k = vM_{k+1}Q_{k+1} + u
 L_{k+2}.
 $$
 Hence
 $$
 u L_{k+2} = v(M_k-M_{k+1} Q_{k+1}) = vM_{k+2}.
 $$
 After $m-2$ steps, we arrive at $uL_m=vM_m$. However, for $i=0,1$, $L_i=P_iL_m$
 with $P_i\in\Lambda$, thus
 $
 uL_i=uP_iL_m=vP_iM_m$
 and  $(L_i,M_i)=P_i(L_m,M_m)$.

 Since $(L_0,M_0)$ is minimal, then $P_0\in U$. Hence $(L_1,M_1)=P_1P_0^{-1}(L_0,M_0)$ and $\I=\Lambda (L,M)$.
 Besides, $(L_1,M_1)$ minimal leads to $P_1\in {\rm U}$, so $(L_1,M_1)\equiv
 (L_0,M_0)$.
 \end{proof}

\medskip

 The minimal representation of RM has an additional advantage.

 \medskip

\bc \label{MINIM}If $u,v\in \mathcal{H}\setminus \Delta$ and $uL=vM$
is minimal, then  $(L_*,M_*)\equiv (L,M)$, i.e., there exist
$\alpha\in\C^*$ and $p\in\Z$ such that $(L^{*_p},M^{*_p}) =
\alpha(L,M)$.\ec

\begin{proof}Since $uL=vM$ and  $u,v\in \mathcal{H}$  we have that $uL_*=vM_*$. The
minimality ensures the existence of $N\in\Lambda$ such that
$(L_*,M_*)=N(L,M)$, therefore $(L,M)=N_*(L_*,M_*)$ and the
minimality of $(L,M)$ implies $N\in {\rm U}$.
 \end{proof}

\medskip

   Multiplying  by a suitable factor in $\C^*$,  the minimal relation
$uL=vM$, we can find a minimal pair such that $(L,M)=(L^{*_p},
M^{*_p})$ for some $p\in\Z$. This kind of relation also holds for
any other (non-minimal) pair $(L_1,M_1) = N(L,M)$ as far as $N_*=z^k N$ for some $k\in\Z$.
A pair satisfying this condition will be called a
symmetric pair. Due to Corollary \ref{MINIM},  the existence of
minimal representations ensures the existence of symmetric pairs.

\medskip

The use of symmetric pairs allows us to  characterize easily those
LST which represent  RM.

\bc\label{CHAR} Let $u,v\in \mathcal{H}\setminus \Delta$, with $F,G$
 the corresponding {\rm CS}, and $L,M\in\Lambda$ such that
$(L^{*_p}, M^{*_p})=(L,M)$. Then, $uL=vM$ iff $FL=GM+C$, where
$C^{*_p}=-C$.\ec

\begin{proof}
From Proposition \ref{FR},   $C=FL-GM=G_*M-F_*L$, hence
$$
C^{*_p} = z^p C_* = z^p(F_*L_*-G_*M)=F_*L-G_*M=-C.
$$
Conversely, suppose $FL=GM+C$, with $(L^{*_p}, M^{*_p}) = (L,M)$ and
$C^{*_p}=-C$. Then,
$$
FL=GM+C,\qquad F_*L^{*_p} = GM^{*_p} + C^{*_p},
$$
and,
$$(F+F_*)L=(G+G_*)M. $$ The identification  of  the LS  $(F+F_*)$  and $(G+G_*)$
  with $u$ and $v$, respectively,  gives $uL_*=vM_*$
or, equivalently, $uL=vM$.
\end{proof}
\medskip

Example 3.2 shows that it is not always possible to simplify the
common factors in a RM. The corresponding LST for this example,
$F(z^2-1) = G(z^2-1)+(z+1)^2$, suggests that the common factors of $L$, $M$
that we can eliminate in a relation $uL=vM$,
are only those which are also common to $C$ in the corresponding LST $FL=GM+C$.
This conjecture will be proved in the
following theorem which characterizes the minimal representation of
a RM in terms of the corresponding LST.

\medskip

\bt\label{CHART} Let $u,v\in \mathcal{H}\setminus \Delta$ and
$L,M\in \Lambda$ such that $(L^{*_p}, M^{*_p})=(L,M)$. Then, $uL=vM$
is minimal iff the corresponding {\rm LST} $FL=GM+C$ is such that
$1\in\gcd(L,M,C)$.\et

\begin{proof} Corollary \ref{CHAR} ensures the existence of
$C\in\Lambda$ such that $FL=GM+C$, and $C^{*_p}=-C$. This fact
together  with the symmetry of the pair $(L,M)$ ensures the existence
of a representative $P\in\gcd (L,M,C)$ such that $P\in\PP$ and
$P^*=P$.  Denoting $(L_1,M_1,C_1)=(L,M,C)/P$, we have $(L_1^{*_q},
M_1^{*_q}, C_1^{*_q})=(L_1,M_1,-C_1)$ for some $q\in\Z$. Corollary
\ref{CHAR} implies $uL_1= vM_1$. Therefore, the minimality of
$uL=vM$ requires $\deg P=0$.

  Conversely, if $L,M,C$ are coprime and $uL=vM$
 is not minimal, there exists $P\in\PP$ dividing $L,M$ with $P^*=P$. Let $(L_1,M_1)=(L,M)/P$.
  Then,  $(L_1^{*_q},M_1^{*_q})=
 (L_1,M _1)$ for some $q\in\Z$, and $uL_1 =vM_1$. Thus, Corollary \ref{CHAR}
 ensures the existence of $C_1\in \Lambda$ such that $FL_1=GM_1+C_1$ and,
 consequently, $C_1=C/P$. Since $L,M,C$ are coprime, $\deg P=0$.
\end{proof}

\medskip

  Theorem  \ref{CHART} shows how the LST help us to know when a RM is
expressed in a minimal way. In what follows  we study the minimality
of the LST, i.e., $$ FL=GM+C, \qquad 1\in\gcd(L,M,C).$$

\medskip

 As in the case of the equivalence relation RM, there is an
 equivalence class for LST where certain ambiguities take place: the
 LST equivalence class of the null functional.

 \medskip

\begin{defn} We define the class of rational functionals as the set
$$
{\rm Rat}:=\{ u \; \in \mathcal{H}\;\;|\;\; \exists\;
L\in\Lambda_0,\; N\in\Lambda\;{\rm s.t.}\;\; FL=N\}.
$$
\end{defn}

\noindent Obviously, Rat are the set of functionals whose CS are
summable  as rational functions. Hence $\Delta \subset {\rm Rat}$.
In the following proposition we describe the functionals belonging
to the Rat class.

\medskip

\begin{prop} Let $u\in\mathcal{H}$  and $F$ its  Carath\'eodory
function. Then, $u\in {\rm Rat}$ iff $\exists (L,M)\in
\Lambda_0\times \Lambda$ such that $uL=\leb M$.
\end{prop}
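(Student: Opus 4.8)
The plan is to prove the equivalence in two directions, using the Laurent-series identity $\mathcal{L}[u]=(F+F_*)/2$ and the Carath\'eodory series of $\leb$. First I would record that the Carath\'eodory function of the Lebesgue measure is $F_{\leb}(z)=1$, so that $\mathcal{L}[\leb]=1$ and, for any $M\in\Lambda$, $\mathcal{L}[\leb\, M]=M_*$. Thus $\leb$ is the simplest example of a rational functional, and its defining property is precisely that its CS is the constant polynomial $1$.

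\medskip

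For the implication $(\Leftarrow)$, suppose $(L,M)\in\Lambda_0\times\Lambda$ with $uL=\leb\, M$. Passing to Laurent series, $\mathcal{L}[u]\,L_*=\mathcal{L}[\leb]\,M_*=M_*$, that is $(F+F_*)L_*=2M_*$. Since $L_*\in\Lambda_0$, I can view this as saying $(F+F_*)L_*$ is a Laurent polynomial. The subtlety is to pass from a statement about $F+F_*$ to one about $F$ alone: here I would split by the ranges of the formal series. Writing $L_*\in\Lambda_{p,q}$, the product $F L_*$ lies in $\Lambda_{p,\infty}$ while $F_*L_*$ lies in $\Lambda_{-\infty,q}$, and their sum equals the Laurent polynomial $2M_*\in\Lambda_{p,q}$; matching coefficients degree by degree shows that $F L_*$ has only finitely many nonzero coefficients (those of index $>q$ must cancel against nothing), hence $F L_*=:N\in\Lambda$ for a suitable $N$. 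Taking $L=(L_*)_*$ (up to the harmless unit ambiguity), this gives $FL\in\Lambda$ and therefore $u\in{\rm Rat}$.

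\medskip

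For the implication $(\Rightarrow)$, suppose $u\in{\rm Rat}$, so there exist $L\in\Lambda_0$ and $N\in\Lambda$ with $FL=N$. Apply $_*$ to get $F_*L_*=N_*$, whence $(F+F_*)L L_* = N L_* + N_* L\in\Lambda$. Since $(F+F_*)=2\mathcal{L}[u]$ and $\mathcal{L}[u\,(LL_*)_*]=\mathcal{L}[u](LL_*)_*\cdot$ — more precisely, using $\mathcal{L}[u\widetilde L]=\mathcal{L}[u]\widetilde L_*$ with $\widetilde L$ chosen so that $\widetilde L_*=LL_*$ — we obtain that $u\widetilde L$ is a functional whose Laurent series is a Laurent polynomial, i.e. $u\widetilde L$ is the functional defined by a Laurent polynomial. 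But a Laurent polynomial $\sum_{k} c_k z^{k}$ as a Laurent series equals $\mathcal{L}[\leb\, M]=M_*$ exactly when $M$ is the corresponding Laurent polynomial, so $u\widetilde L=\leb\, M$ for a suitable $M\in\Lambda$. Setting the pair $(\widetilde L,M)$ completes this direction.

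\medskip

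The main obstacle I anticipate is the bookkeeping in the $(\Leftarrow)$ direction: one must argue carefully that a relation of the form (finite Laurent polynomial) $=FL_*+F_*L_*$ forces $FL_*$ itself to be a Laurent polynomial, exploiting that $F\in\Lambda_{0,\infty}$ and $F_*\in\Lambda_{-\infty,0}$ so that the ``tails'' in the two summands live in disjoint ranges of indices and cannot cancel each other. Once that range argument is in place, everything else is routine manipulation with $\mathcal{L}$, the identity $\mathcal{L}[uL]=\mathcal{L}[u]L_*$, and the fact that $F_{\leb}=1$.
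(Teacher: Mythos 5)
Your proof is correct, but it takes a genuinely different route from the paper's. The paper derives both implications from Corollary \ref{CHAR} applied with $v=\leb$ and $G=1$: for the forward direction it uses the hermiticity of $u$ to pick a self-reciprocal denominator $Q=Q^*$ and splits the numerator $P$ into its $^{*_q}$-symmetric and antisymmetric parts $M$ and $C$, and for the converse it assumes without loss of generality that $(L,M)$ is a symmetric pair. You bypass Corollary \ref{CHAR} entirely and argue directly with the identities $\mathcal{L}[uL]=\mathcal{L}[u]L_*$ and $\mathcal{L}[\leb M]=M_*$. Your index-range argument showing that $FL_*+F_*L_*\in\Lambda$ forces $FL_*\in\Lambda$ is precisely the computation that underlies Proposition \ref{FR} and Corollary \ref{CHAR}, so nothing essential is missing; and replacing the paper's choice of a symmetric denominator (which rests on the tacit claim that the poles of $F$ are symmetric with respect to $\T$) by multiplication with $L_*$, so that $LL_*$ is automatically $*$-invariant, is a clean, fully self-contained alternative that moreover handles $u\in\Delta$ (where $M=0$) uniformly, whereas the paper must treat that case separately because Corollary \ref{CHAR} requires $u,v\notin\Delta$. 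One small blemish: the parenthetical assertion $2M_*\in\Lambda_{p,q}$ need not hold (take $F=1+2z$ and $L=1$, which give $M_*=z^{-1}+1+z$), but you never actually use it --- all your argument needs is that $2M_*$ is \emph{some} Laurent polynomial, so that the coefficients of $FL_*$ of sufficiently large index must vanish.
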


\begin{proof} The functionals $ u\in \Delta$ correspond to  $M=0$.
 If $u\in {\rm Rat}\setminus \Delta$, there exist $P, Q \in \PP$
 such that $FQ=P$. Since $u$ is hermitian, the poles of its Carath\'{e}odory function must be
symmetric whit respect to $\T$. Thus, we can choose $Q=Q^*$, $\deg
Q=q$. Consequently, from
$$
FQ={P+P^{*_q}\over 2} + {P-P^{*_q}\over 2} = M + C,
$$
where $\ds M=\frac{P+P^{*_q}}{2}=M^{*_q}$ and $\ds
C=\frac{P-P^{*_q}}{2}=-C^{*_q}$, and Corollary \ref {CHAR}, we find
that $uQ=\leb M$.

 Conversely, if $uL=\leb M$, the hermiticity of $u$ allows us
 to assume without loss of generality that $L=L^{*_p}$, $M=M^{*_p}$, for
 some $p\in\Z$. Corollary \ref{CHAR} ensures the existence of $C\in\Lambda$ such
that $C=-C^{*_p}$ and $FL=M+C=N$.
\end{proof}

 \medskip

  Given the  LST  $FL=GM+C$, the equality $FLN=GMN+CN$ is also a
  LST for all $N\in \Lambda$. In contrast to RM,
 the  LST  are  reducible to expressions where $L,M,C$ are
 coprime, which will be the the minimal  LST. As a last question for this section,
 we ask ourselves when a minimal  LST  is unique (up to units of $\Lambda$)
 for a pair of hermitian functionals $u,v$. The answer is given by
 the next proposition.

\begin{prop} \label{CHARMIN}Let $u,v\in\mathcal{H}$ and   $F,G$ their respective {\rm CS}.
Then, $u,v\not\in {\rm Rat}$  iff the minimal {\rm LST}, $FL=GM+C$,
is unique up to units of $\Lambda$.
\end{prop}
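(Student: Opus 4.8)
The plan is to prove the two implications separately, both by elementary divisibility arguments in the Laurent polynomial ring $\Lambda$. Throughout I assume that $u$ and $v$ are related by some LST, since otherwise the statement is vacuous; in that case, dividing any LST by a representative of $\gcd(L,M,C)$ shows that a minimal LST exists. The first thing I would record is that rationality propagates through an LST: if $FL=GM+C$ and $GL_{0}=N_{0}\in\Lambda$ with $L_{0}\in\Lambda_{0}$, then $F(LL_{0})=MN_{0}+CL_{0}\in\Lambda$, so $u\in{\rm Rat}$; by symmetry, in the presence of an LST one has $u\in{\rm Rat}\Leftrightarrow v\in{\rm Rat}$. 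Hence the hypothesis that $u,v\notin{\rm Rat}$ just says that no nonzero element of $\Lambda$ multiplies $F$ (equivalently $G$) into $\Lambda$; and since $\Delta\subset{\rm Rat}$, the class $\Delta$ requires no separate treatment.

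For the forward implication (uniqueness under $u,v\notin{\rm Rat}$), suppose $FL=GM+C$ and $FL'=GM'+C'$ are two minimal LST's. I would cross-multiply the first by $L'$ and the second by $L$ and subtract; using $FLL'=FL'L$ in the ring of formal series, this gives $G(ML'-M'L)=C'L-CL'$, whose right-hand side lies in $\Lambda$. Since $v\notin{\rm Rat}$ and $ML'-M'L\in\Lambda$, this is impossible unless $ML'-M'L=0$, whence also $C'L-CL'=0$. Therefore $L'(L,M,C)=L(L',M',C')$ coordinatewise. Now I would use the fact, already exploited in Section~\ref{MINRMLST}, that $\Lambda$ (the localization of $\PP=\C[z]$ at $\{z^{n}\}$) carries a division algorithm and unique gcd's: from $L\mid\gcd(L'L,L'M,L'C)=L'\gcd(L,M,C)=L'$ and the symmetric computation, $L$ and $L'$ are associates, say $L'=\alpha z^{k}L$ with $\alpha z^{k}\in{\rm U}$; cancelling $L$ (there are no zero divisors) gives $(L',M',C')=\alpha z^{k}(L,M,C)$, i.e.\ uniqueness up to units.

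For the converse I argue contrapositively. Assume $u\in{\rm Rat}$ (by the opening remark this is the same as assuming that either $u$ or $v$ is in ${\rm Rat}$, so no generality is lost), and fix $L_{1}\in\Lambda_{0}$ and $N_{1}\in\Lambda$ with $FL_{1}=N_{1}$. Given a minimal LST $FL=GM+C$ and any $R\in\Lambda_{0}$ with $L+L_{1}R\neq0$ (this fails for at most one value of $R$), the identity $F(L+L_{1}R)=GM+(C+N_{1}R)$ is again an LST; dividing it by a representative $D$ of $\gcd(L+L_{1}R,\,M,\,C+N_{1}R)$ produces a minimal LST $(L'',M'',C'')$. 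If $(L'',M'',C'')$ were a unit multiple of $(L,M,C)$, comparing the middle entries would force $D\in{\rm U}$, hence $L_{1}R=0$, contradicting $R\neq0$. So the minimal LST is not unique up to units.

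The only genuinely delicate step is the forward direction, and it is concentrated in ruling out $ML'-M'L\neq0$: this is exactly where the hypothesis $v\notin{\rm Rat}$ is indispensable, since otherwise $ML'-M'L$ could be a genuine denominator of $G$; and it is also where one leans on the division algorithm in $\Lambda$ to pass from the coordinatewise identity $L'(L,M,C)=L(L',M',C')$ together with coprimality to the conclusion that $L$ and $L'$ are associates. The cross-multiplication bookkeeping, the perturbation construction, and the remark that $\Delta\subset{\rm Rat}$ is automatically covered are all routine.
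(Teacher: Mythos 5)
Your proof is correct, and its forward direction is essentially the paper's: both arguments cross-multiply the two minimal LSTs to obtain $G(ML'-M'L)=C'L-CL'$ and invoke $v\notin{\rm Rat}$ to force both sides to vanish; you then spell out, via gcd's and cancellation in $\Lambda$, the step that the paper compresses into ``which implies the proportionality.'' The genuine difference is in the converse. The paper assumes both $u,v\in{\rm Rat}$ and perturbs $FQ=GS+C$ by adding $GSK$ (using $GS=R\in\PP$), obtaining $(Q,2S,C-R)$ not proportional to $(Q,S,C)$; you perturb by adding $FL_1R$ (using $FL_1=N_1$), which needs only $u\in{\rm Rat}$. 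Your route buys two things the paper leaves implicit: first, the preliminary remark that an LST forces $u\in{\rm Rat}\Leftrightarrow v\in{\rm Rat}$, so the contrapositive hypothesis ``$u\in{\rm Rat}$ or $v\in{\rm Rat}$'' is genuinely covered rather than silently upgraded to ``both''; second, you reduce the perturbed LST to minimal form and verify that the reduced triple is still not a unit multiple of $(L,M,C)$, whereas the paper exhibits two non-proportional LSTs without checking that this persists after passing to \emph{minimal} representatives (it does, by comparing the first two entries, but the check is omitted). The two proofs are of comparable strength; yours is the more self-contained account.
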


\begin{proof} Let $FL=GM+C$ and $FL_1=GM_1+C_1$ be minimal, such that
$(L_1,M_1,C_1)\not\in {\rm U}(L,M,C)$. Then,
$$
FLL_1=GML_1+CL_1=GM_1L+C_1L
$$
implies
$$
G(ML_1-LM_1) = C_1L-CL_1.
$$
If  $v\not\in{\rm Rat}$, both sides of the above equation will be
zero, which implies the proportionality between $(L,M,C)$ and
$(L_1,M_1,C_1)$.

Conversely, let $u,v\in {\rm Rat}$,  $FQ=P$, $GS=R$, with
$P,Q,R,S\in\PP$. Obviously, $FQ=GS+C$, with $C=R-P$, is a  LST  for
$u,v$. Let $K\in \Lambda_0$, $M=S(K+1)$  and  $C_1=C-RK$. Then,
$$
FQ-GM = G(S-M)+C = -GSK+C_1 +RK=C_1,
$$
and thus  we obtain a new LST  for every $K$. Choosing $K=1$, we
find that $FQ=2GS+C-R$. In this LST, $(Q,2S,C-R)$ is not
proportional to $(Q,S,C)$.
\end{proof}
% which is not proportional to $LQ=GS+C$.

\medskip

  As a direct consequence of the previous results we find that the
  LST relating two functionals $u,v\in \mathcal{H}\setminus{\rm
  Rat}$ are generated by the unique minimal one $FL_0=GM_0+C_0$,
  i.e., $\{(L,M,C)\in \Lambda_0\times \Lambda_0\times \Lambda \;|\;
  FL=GM+C\} = \Lambda (L_0,M_0,C_0)$.

\medskip

  Besides, according to Corollary \ref{MINIM}, Corollary \ref{CHAR} and Theorem  \ref{CHART}, to verify
  that there is a RM behind a LST, we only need to check that the
  minimal representation $FL=GM + C$ of a LST satisfies the
  symmetry conditions $(L^{*_p}, M^{*_p}, C^{*_p}) = \alpha(L,M,-C)$ for
  some $\alpha\in\C^*$, $p\in\Z$, i.e., $(L_*, M_*, C_*) \in{\rm{U}}(L,M,-C)$.

%%%%%%%%%%%%%%%%%%%%%%%%%%%%%%%%%%%%%%%%%%%%%%%%%%%%%%%%%%%%%%%%%%%%%%%%%%%%%%%%%%%%%
\section{General LST} \label{GENLST}
%%%%%%%%%%%%%%%%%%%%%%%%%%%%%%%%%%%%%%%%%%%%%%%%%%%%%%%%%%%%%%%%%%%%%%%%%%%%%%%%%%%%%
\medskip

  Theorem  \ref{CHART}  provides the conditions which characterize that a LST,
  $FL=GM+C$, in its minimal form, comes from a RM. In this section
  we will analyze what happens if we remove these symmetry conditions
  $(L_*, M_*, C_*) = {\rm U}(L,M,-C)$.

   Returning to Example 3.1, we can easily  check   that  the Laurent polynomials
  $L=1$, $M=(1-z^2)$, $C=0$, define the minimal form of the corresponding LST,
  but do not satisfy these symmetry conditions. This means that Example
  3.1 is a case of LST which does not come from a RM.

  Although the  functionals involved in Example 3.1 are both given
  by positive measures, we should remark that one of these measures
  is supported on a single point. The following example shows that
  there exist positive measures supported on infinitely many points
  which are related by a LST but not by a RM.

\medskip

\bex
 We consider the functional $u$ associated with the Lebesgue measure $\mu$
 supported on the arc $\Gamma=\{e^{i\theta} \;|\; \theta \in \left[{\pi\over 2}, {3\pi\over 2}\right]\}$.
 Is is easy to check that the corresponding moments are
$$
\mu_0=1;\qquad \mu_{2n}=0,\qquad \mu_{2n-1}={2\over \pi}{(-1)^n\over
2n-1}, \qquad n\geq 1.
$$

Therefore, the CS associated with $u$ is
$$ F(z)=1+{4\over
\pi}\sum_{n\geq 1} (-1)^n {z^{2n-1}\over 2n-1}, $$ which is summable
for $|z|<1$
$$
F(z)=1 + {2i\over \pi} \log\left[{1+iz\over 1-iz}\right],
$$
and has  real part
$$
\re F(re^{i\theta}) = 1-{2\over \pi}\pi
\arg\,[(1+ire^{i\theta})(1+ire^{-i\theta})].
$$

 Now, we perform the LST
 $$ G(z) = z
F(z)+\alpha,
$$
where we will choose $\alpha$ so that  $\re(G(z))>0$ for $|z|<1$.
This condition ensures that $G$ is the Carath\'{e}odory function of a functional $v$ given
by a positive measure $\nu$ supported on the unit circle.
A straightforward computation yields
$$
\begin{aligned}
\re G(r e^{i\theta}) &\geq \alpha + r \cos \theta \re F(re^{i\theta})\\
&=\alpha + r \cos \theta\left(1-{2\over \pi} \arg[(1+
re^{i\theta})(1+i e^{-i\theta})]\right)\geq \alpha-2r,
\end{aligned}
$$
which means that $G$ is a Carath\'eodory function for $\alpha\geq
2$. The radial limits of $G$ provide the weight of the corresponding measure $\nu$,
which is absolutely continuous because $G$ has the same
analyticity behavior as $F$. We obtain
$$
\nu'(\theta)= \alpha + 2 \cos\theta\,\chi_\Gamma(\theta) + {1\over
\pi} \sin\theta \log\left[{1+\sin \theta \over 1-\sin
\theta}\right].
$$
where $\chi_\Gamma$ is the characteristic function of the arc
$\Gamma$.

While $\mu$ is supported only on the arc $\Gamma$, the measure $\nu$
is supported on the whole unit circle $\T$. Consequently, although
$u, v$ are related by a LST, they can not be related by a RM.

Note that the Laurent polynomials in the minimal LST,
$G(z)=zF(z)+2$, do not satisfy the symmetry conditions
$(L_*,M_*, C_*) = {\rm U}(L,M,-C)$. \eex

 In the previous examples we have shown that the transformations in $\{\rm LST\} \setminus \{\rm RM\}$, in
contrast to RM, do not preserve the support of the absolutely
continuous part of the measure. We will refer to this as a {\sl wild
behaviour}. In what follows  we analyze the origin of this behaviour
in the connection $u \leftrightarrow v$.

\medskip

    We consider a general LST, i.e.,
without any conditions over the Laurent polynomials $L,M,C$, to find
out the roots of the wild behavior. Thus, we consider the LST
 \begin{equation}
 \label{GENLST1}
 FL=GM+C,\qquad L,M\in\Lambda_0,\qquad C\in\Lambda.
 \end{equation}

   Without loss of generality, we can choose $L=L^{*_p}$ for some
 $p\in\Z$,  multiplying the LST by a suitable  factor in $\Lambda_0$.
\medskip
\noindent From (\ref{GENLST1}), $ F_*L_*=G_*M_*+C_*$. Equivalently,
 \begin{equation}\label{GENLST2}
F_*L=F_*L^{*_p} =G_*M^{*_p} + C^{*_p}.
\end{equation}
 From (\ref{GENLST1}) and (\ref{GENLST2}),
$$
{F+F_*\over 2} L = {G+G_*\over 2}{M+M^{*_p}\over 2} - {G-G_*\over
2i}{M-M^{*_p}\over 2i} + {C+C^{*_p}\over 2}.
$$
Denoting
$$
M^+={M+M^{*_p}\over 2},\quad M^-={M-M^{*_p}\over 2i},\quad C^+=
{C+C^{*_p}\over 2},
$$ we have
\begin{equation}\label{GENLSTABREV}
{F+F_*\over 2} L={G+G_*\over 2} M^+ - {G-G_*\over 2i} M^-+C^+.
\end{equation}

   The LS  $\ds {G-G_*\over 2i}$ has an associated hermitian functional that we will denote by $\hat{v}$. Then,
  (\ref{GENLSTABREV})  can be written in terms of the functionals $u,v$ and
  $\hat{v}$ as
  \begin{equation}\label{GENLSTABREVFUNCT}
  uL_*=v(M^+)_* + \hat{v}(M^-)_*+ \leb (C^+)_*.
  \end{equation}
Nevertheless,
$$
(M^+)_* = {M_*+z^{-p}M\over 2} = z^{-p} {M+M^{*_p}\over 2} =
z^{-p}M^+,
$$
and,  analogously,  $(M^-)_*=z^{-p}M^-$ and $(C^+)_*= z^{-p}C^+$.
Multiplying Equation (\ref{GENLSTABREVFUNCT}) by $z^{p}$ gives
 \begin{equation}\label{GENLSTFINAL}
 uL= vM^+-\hat{v}M^- + \leb C^+,
 \end{equation}
 where $(L,M^+,M^-,C^+) = (L,M^+,M^-,C^+)^{*_p}$.
\medskip
  The functional $\hat{v}$   has the  associated LS
  $$
  {G-G_*\over 2i} = {1\over i}\left(\sum_{n\geq 1}\nu_{-n} z^n - \sum_{n\geq 1} \nu_n z^{-n}\right) = \sum_{n\neq 0} \hat{\nu}_{-n}z^n,
  $$
  with $\nu_n=v[z^n]$ and $\hat{\nu}_n= -i \nu_n=\hat{u}[z^n]$. Denoting by $\hat{G}$ the CS of $\hat{v}$, we have
$$
\hat{G} = 2\sum_{n\geq 1} \hat{\nu}_{-n} z^n = i (\nu_0 - G),
$$
$$
\hat{G}_* = 2\sum_{n\geq 1} \hat{\nu}_{n} z^{-n} = i (G_* -\nu_0),
$$
and, consequently,
$$
\hat{v}= \begin{cases}  &  i(v-\leb \nu_0)\quad ({\rm in}\;
\PP),\\& i(\leb \nu_0-v)\quad ({\rm in}\;\; \PP_*).\\
\end{cases}
$$

 Subtracting (\ref{GENLST1}) and (\ref{GENLST2}),
 $$
 {F-F_*\over 2i} L = {G+G_*\over 2}{M-M^{*_p}\over 2i} + {G-G_*\over 2}{M+M^{*_p}\over 2i}+ {C-C^{*_p}\over 2i},
 $$
 i.e.,
 \begin{equation}\label{GENLSTFINALHAT}
 \hat{u} L = vM^- + \hat{v} M^+ + \leb\;C^-,
 \end{equation}
 where $C^-=(C-C^{*_p})/2i$  and
 $$
 \hat{u}= \begin{cases}& i(u-\leb \mu_0)\quad ({\rm in}\;
\PP),\\ & i(\leb \mu_0-u)\quad ({\rm in}\;
 \PP_*),\\ \end{cases}
 $$
 with $\mu_n= u[z^n]$. The expressions (\ref{GENLSTFINAL}) and (\ref{GENLSTFINALHAT}) can be written in
 matrix form as
 $$
 (u, \hat{u})L = (v,\hat{v}) \begin{pmatrix} M^+ &
M^-\cr -M^-&M^+\end{pmatrix} + \leb (C^+,C^-).
$$

\medskip

 The wild behaviour of the LST is originated by the presence of the
 functional $\hat{v}$ and the Lebesgue functional in Equation \ref{GENLSTFINAL}.
 In other words, the relation between $u$ and $v$ fails to be a RM
 only due to the presence of the polynomial coefficients $M^-$ and
 $C^+$ in Equation \ref{GENLSTFINAL}.
 Therefore, a LST becomes a RM when $M^-=C^+=0$. This leads to
 the symmetry conditions which we already know that characterize those
 LST coming from a RM.

  \medskip

%%%%%%%%%%%%%%%%%%%%%%%%%%%%%%%%%%%%%%%%%%%%%%%%%%%%%%%%%%%%%%%%%%%%%%%%%%%%%%%%%%%%%
\section{Generators of  LST} \label{GENERATORSLST}
%%%%%%%%%%%%%%%%%%%%%%%%%%%%%%%%%%%%%%%%%%%%%%%%%%%%%%%%%%%%%%%%%%%%%%%%%%%%%%%%%%%%%
\medskip
  In this section we analyze another aspect of the LST, namely, we will show how to generate
  them by composition of certain elementary LST that we will call generators.

  First of all, without loss of generality, we will write any LST as
$$
FA = GB + C,\qquad A,B,C\in \PP.
$$
  Then, we define the class $(r,s,t)$ by the conditions $\deg A=r$, $\deg B=s$, $\deg C=t$
  and $(A(0),B(0),C(0))\neq (0,0,0)$. If $C=0$ we will say that the corresponding
  LST  belongs to the $(r,s)$ class.

\medskip

  Note that the (0,0) class is generated by composition of
 LST belonging to
 the (0,0,0) class. The (1,0) and (0,1) classes generate by
 composition all the $(r,s)$ classes  with $(r,s)\not=(0,0)$. Analogously, the
(1,0,0) and (0,1,0) classes generate the $(r,s,0)$ classes with $(r,s)
\neq (0,0)$. At the same time, the $(1,0,0)$ and $(0,1,0)$ classes
 are generated by the $(0,0,0), (1,0)$ and $(0,1)$ ones: any $(1,0,0)$ class LST
 such as $F(\alpha_0+\alpha_1 z)= G \beta+c$ is the composition of
 $ F(\alpha_0+\alpha_1 z)=\tilde{F}\alpha_0$ and $\tilde{F}\alpha_0= G \beta+ c$.

\medskip

  Consider now the elementary classes $(0,0,0)$, $(1,0)$ and $(0,1)$. We will
  show that they generate all the classes $(r,s,t)$ with
  $r,s,t\geq 0$. For this purpose,  it will be enough to prove that every
  $(0,0,t )$ class LST, with $t\geq 1$, can be transformed into a
  $(0,0,t_1)$ class LST, with $t_1 < t$, by means of elementary LST.
  This is because any LST in the $(r,s,t)$ class can be reduced to a
  LST in the class $(0,0,t')$ or $(0,0)$ by $(1,0,0)$ and $(0,0,0)$
  class LST, as it is easy to check.

   Let
   \begin{equation}\label{GNTR} FA = GB + C
   \end{equation}
   be a   LST  of class $(r,s,t)$. Let
   $A_1=A/A_0$ where $A_0$ is a degree one divisor of $A$. The
   composition of $FA_0=\tilde{F}+c$ and  $\tilde{F}A_1 = G B+
   C-cA_1$ gives (\ref{GNTR}), the first LST being in the $(1,0,0)$ class, and the second
   one in the $(r-1,0,t')$ class  with $t'=\deg(C-cA)\leq \max\{r-1,t\}$.

  Now, we will see that every  $(0,0,t)$ class LST
  $$
 F   \alpha = G \beta + C, \qquad t\geq 1,
  $$
  is generated by the $(0,0,0)$, $(1,0)$ and $(0,1)$ classes.

Let us consider the  $(1,0,0)$ and $(1,0)$ class transformations
 $$
\tilde{F} z = {F-\mu_0\over 2\mu_{-k}},\qquad  \tilde{G} z
={G-\nu_0\over
 2\nu_{-j}},
 $$
 where $\mu_{-k}$ and $\nu_{-j}$ are the first  non zero coefficients
 of $F$ and $G$ with indices $k,j \geq 1$.
 From the equality $F \alpha  =  G \beta +C$ we have
 $$
 \alpha\mu_0=\beta\nu_0 + C(0),
 $$
 and from
 $$
 (\tilde{F}2\mu_{-k}z  + \mu_0)\alpha =
(\tilde{G}2\nu_{-j}z+\nu_0) \beta+C
 $$
 it follows that
 $$
\tilde{F} 2\alpha \mu_{-k}z  + \alpha\mu_0  =
 \tilde{G} 2\beta \nu_{-j}z+\beta\nu_0 +C_1,
 $$
 with $C_1(z)=(C(z)-C(0))/z$ such that $\deg C_1\leq t-1$.

 This proves that the elementary LST classes
 generate by composition  all the LST.

 \medskip

 Finally, we will analyze the relation between the functionals related by the
elementary classes.

 A general $(0,0,0)$ class LST has the form
$$
 F \alpha=G\beta  +c,
$$
where, without loss of generality, we can assume that $\alpha\in\R$, i.e. $A=A^*$. In
such a case
$$
u \alpha  =  v \re(\beta) - \hat{v}  \im (\beta) + \re(c),
$$
in other words,
$$
\begin{aligned}
 u \alpha&=  v \re(\beta)+ v i \im (\beta)  -
v _0 i \im (\beta) +
\re(c)\quad ({\rm in}\;\PP),\\
u \alpha &= v \re(\beta) - v i \im(\beta) +  v_0 i \im (\beta)+
\re(c)\quad ({\rm in}\;\PP_*).
\end{aligned}
$$
Equivalently,
$$
\begin{aligned}
 u \alpha &= v \beta -v_0i  \im (\beta)  +\re(c)\quad({\rm in}\;\PP),\\
 u \alpha &= v {\bar \beta}+ v_0 i \im (\beta) +\re (c)\quad({\rm in}\;\PP_*),
\end{aligned}
$$
which describe completely the functional modification.
\medskip

 As for the $(0,1)$ class LST
$$
\hskip 25pt  F \alpha= G B, \hskip 25pt B=\beta_0+\beta_1z, \hskip
10pt \alpha \in\R^*,
$$
we have that $\alpha \mu_0=\beta_0\nu_0$, thus $\beta_0\in\R$. Since
$$
u \alpha = v{B+B^*\over 2} - \hat{v} {B-B^*\over 2i},
$$
we find that
$$
\begin{aligned}
 u \alpha&=   v B - \leb i {\beta_1 z - \overline{\beta}_1 z^{-1}\over 2i}\nu_0
\qquad\;({\rm in}\; \PP),\\
 u \alpha&=   v B_*  -  \leb i {\beta_1 z + \overline{\beta}_1 z^{-1}\over 2i}\nu_0
\qquad({\rm in}\; \PP_*).
\end{aligned}
$$

    Analogously,  for the $(1,0)$ class LST
 $$
  F (\alpha_0+\alpha_1 z)= G  \beta, \qquad \beta\in\R^*,
 $$
 we conclude that $\alpha_0\in\R$ and
 $$
 \begin{aligned}
 v \beta&=
 u A- \leb i {\alpha_1 z - \overline{\alpha}_1 z^{-1}\over 2i}
 \mu_0\;\;\;\;\qquad ({\rm in}\;\;\PP),\\
 v \beta&=  u A_* + \leb i {\alpha_1 z - \overline{\alpha}_1
 z^{-1}\over 2i} \mu_0\ \;   \qquad ({\rm in}\; \PP_*),\\
\end{aligned}
 $$
 where $A=\alpha_0+\alpha_1 z$.

 \bigskip

 \noindent{\bf Acknowledgements}

\medskip
This work was partially supported by the research
 projects MTM2008-06689-C02-01 and MTM2011-28952-C02-01 from the Ministry of  Science and
Innovation of Spain and the European Regional Development Fund
(ERDF), and by Project E-64 of Diputaci\'on General de Arag\'on
(Spain).

\bigskip

\end{document}